\begin{document}

    

   \author{D\"orte Kreher}
   \address{Universit\"at Z\"urich, Z\"urich, Switzerland}
   \email{doerte.kreher@math.uzh.ch}
   
   \author{Ashkan Nikeghbali}
   \address{Universit\"at Z\"urich, Z\"urich, Switzerland}
   \email{ashkan.nikeghbali@math.uzh.ch}
  
	\begin{abstract}
	In this note we introduce a new kind of augmentation of filtrations along a sequence of stopping times. This augmentation is suitable for the construction of new probability measures associated to a positive strict local martingale as done in \cite{split1}, while it is on the other hand rich enough to make classical results from stochastic analysis hold true on some stochastic interval of interest. 
	\end{abstract}
	


   \title{A new kind of augmentation of filtrations suitable for a change of probability measure by a strict local martingale}
   \date{\today}
   
\newcommand{\R}{\mathbb R}
\newcommand{\Z}{\mathbb Z}
\newcommand{\E}{\mathbb E}
\newcommand{\1}{\mathbbm I}
\newcommand{\indic}{\mathbb I}
\newcommand{\N }{\mathbb N}
\newcommand{\Q }{\mathbb Q}
\newcommand{\q }{\mathsf Q}
\newcommand{\p }{\mathsf P}
\newcommand{\F }{\mathcal F}
\newcommand{\G }{\mathcal G}
\newcommand{\h }{\mathcal H}
\newcommand{\M }{\mathcal M}
\newcommand{\eps}{\varepsilon}
\newcommand{\argmax}{\operatornamewithlimits{argmax}}

\newcommand{\nc}{\newcommand}
\nc{\bg}{\begin} \nc{\e}{\end} \nc{\bi}{\begin{itemize}} \nc{\ei}{\end{itemize}} \nc{\be}{\begin{enumerate}} \nc{\ee}{\end{enumerate}} \nc{\bc}{\begin{center}} \nc{\ec}{\end{center}} \nc{\bq}{\begin{equation}} \nc{\eq}{\end{equation}} \nc{\la}{\label} \nc{\fn}{\footnote} \nc{\bs}{\backslash} \nc{\ul}{\underline} \nc{\ol}{\overline} \nc{\np}{\newpage} \nc{\btab}{\begin{tabular}} \nc{\etab}{\end{tabular}} \nc{\mc}{\multicolumn} \nc{\mr}{\multirow} \nc{\cdp}{\cleardoublepage} \nc{\fns}{\footnotesize} \nc{\scs}{\scriptsize} \nc{\RA}{\Rightarrow} \nc{\ra}{\rightarrow} \nc{\rc}{\renewcommand} \nc{\bfig}{\begin{figure}} \nc{\efig}{\end{figure}} \nc{\can}{\citeasnoun} \nc{\vp}{\vspace} \nc{\hp}{\hspace}\nc{\LRA}{\Leftrightarrow}\nc{\LA}{\Leftarrow}\nc{\rr}{\q}

\nc{\ch}{\chapter}
\nc{\s}{\section}
\nc{\subs}{\subsection}
\nc{\subss}{\subsubsection}

\newtheorem{thm}{Theorem}[section]
\newtheorem{cor}[thm]{Corollary}
\newtheorem{lem}[thm]{Lemma}

\theoremstyle{remark}
\newtheorem{rem}[thm]{Remark}

\theoremstyle{remark}
\newtheorem{ex}[thm]{Example}

\theoremstyle{definition}
\newtheorem{df}[thm]{Definition}

\newenvironment{rcases}{
  \left.\renewcommand*\lbrace.
  \begin{cases}}
{\end{cases}\right\rbrace}

\setlength\parindent{0pt}
\pagestyle{plain}

\maketitle

\s{Introduction}

The goal of this paper is to introduce a new kind of augmentation of filtrations which is suitable for a change of probability measure associated to a strict local martingale. While it is safe and very convenient to work under the usual conditions when doing a change of probability measure where the density process is a uniformly integrable martingale, one must be more careful if one takes a non-uniformly integrable martingale or  a strict local martingale as a "potential" Radon-Nikodym density process. \\
Indeed it was already noted by Bichteler (\cite{Bichteler}), and later in \cite{newkind}, that in order to extend a consistent family of probability measures from $\bigcup_{t\geq0}\F_t$ to $\F_\infty=\sigma\left(\bigcup_{t\geq0}\F_t\right)$, one has to impose certain topological requirements on the probability space and one must refrain from the usual assumptions. This is however rather unsatisfactory in general, since results from stochastic analysis like the existence of regular versions of martingales do require some augmentation of the filtration. The existence of such versions is for example of interest whenever one considers an uncountable number of stochastic processes as it is often the case in dynamic optimization problems. This led the authors of \cite{newkind} to introduce a new kind of augmentation of filtrations, the natural  augmentation, that is compatible with the construction of a probability measure on $\F_\infty$ whose density process is defined via a non-uniformly integrable martingale. \\
While a positive strict local martingale $(X_t)$, i.e.~a positive local martingale which is not a true martingale, cannot directly serve as a Radon-Nikodym density process, it is still possible to construct a new measure $\q$ on $\F_{\tau^X-}$ by extending the consistent family of measures $\q_n$ defined on $\F_{\tau_n^X}$ by 
	\[\q_n=X_{\tau_n^X}.\p,\quad \tau_n^X=\inf\{t\geq0:\ X_t>n\}\wedge n,\quad \tau^X=\lim_{n\ra\infty}\tau_n^X,
\]
if the filtration on the underlying probability space is the right-augmentation of a so called standard system. Standard systems were introduced in \cite{para} and first used in the above context in \cite{exit}. Since in this case the measure $\q$ is only uniquely defined on the sub-$\sigma$-algebra $\F_{\tau^X-}$ and is generally not absolutely continuous with respect to $\p$ on $\F_t$ for all $t\in\R$, we cannot use the natural augmentation from \cite{newkind}. While the problem in \cite{newkind} was the inclusion of null-sets from $\F_\infty$ in the initial filtration $\F_0$, the problem now becomes even more severe in that one can no longer include any null-events that happen after time $\tau^X$ in the initial filtration $\F_0$. This leads us to introduce a new kind of augmentation of filtrations along a sequence of stopping times that is on the one hand rich enough to make classical results from stochastic analysis hold true up to some stopping time and that on the other hand still allows for the construction of the new probability measure.\\ 
This note is organized as follows: in the next section we introduce a new kind of augmentation of filtrations along an increasing sequence of stopping times and we establish the existence of nice versions of stochastic processes  up to some stopping time under the new augmentation. In section \ref{newmeas} we briefly review the construction of the above mentioned probability measure associated to a positive (strict) local martingale, before we apply the augmentation results from section \ref{augment} in this setting.

\s{$(\tau_n)$-natural assumptions}\label{augment}

Let $(\Omega,\F,(\F_t)_{t\geq0},\p)$ be a filtered probability space. We will start with a definition before stating the augmentation theorem.

\begin{df}
Let $(\tau_n)_{n\in\N}$ be an increasing sequence of stopping times on $(\Omega,\F,(\F_t)_{t\geq0},\p)$.
\bi
\item A subset $A\in\Omega$ is called $(\tau_n)_{n\in\N}$-negligible with respect to $(\Omega,\F,(\F_t)_{t\geq0},\p)$, iff there exists a sequence $(B_n)_{n\in\N}$ of subsets of $\Omega$, such that for all $n\in\N$, $B_n\in\F_{\tau_n},\ \p(B_n)=0$, and $A\subset\bigcup_{n\in\N}B_n$.
\item We say that the filtered probability space $(\Omega,\F,(\F_t)_{t\geq0},\p)$ is $(\tau_n)$-complete, iff all the $(\tau_n)$-negligible sets of $\Omega$ are contained in $\F_0$. It satisfies the $(\tau_n)$-natural conditions, iff it is $(\tau_n)$-complete and the filtration $(\F_t)_{t\geq0}$ is right-continuous.
\ei
\end{df}

Note that in the case of $\tau_n=n$, the above definition as well as the next theorem reduces to the case of the natural augmentation studied in \cite{Bichteler} and \cite{newkind}, where all ${\F}^+_n$-negligible sets for all $n\in\N$ are included in ${\F}_0$. Thus, the following theorem can be seen as a generalization of Proposition 2.4 in \cite{newkind}.
 
\begin{thm} 
Let $(\tau_n)_{n\in\N}$ be an increasing sequence of stopping times on $(\Omega,\F,(\F_t)_{t\geq0},\p)$ and denote by $\mathcal{N}$ the family of all $(\tau_n)$-negligible sets with respect to $\p$. Set $\tilde{\F}=\sigma(\F,\mathcal{N})$ and $\tilde{\F}_{t}=\sigma(\F^+_t,\mathcal{N})$ for all $t\geq0$. Then there exists a unique probability measure $\tilde{\p}$ on $(\Omega,\tilde{\F})$, which coincides with $\p$ on $\F$, and the space  $(\Omega,\tilde{\F},(\tilde{\F}_t)_{t\geq0},\tilde{\p})$ satisfies the $(\tau_n)$-natural conditions. Moreover, $(\Omega,\tilde{\F},(\tilde{\F}_{t})_{t\geq0},\tilde{\p})$ is the smallest extension of $(\Omega,\F,({\F}_{t})_{t\geq0},{\p})$, which satisfies the $(\tau_n)$-natural conditions. We therefore call it the $(\tau_n)$-augmentation of $(\Omega,\F,({\F}_{t})_{t\geq0},{\p})$.
\end{thm}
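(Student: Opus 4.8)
The plan is to prove the four assertions in sequence: existence and uniqueness of $\tilde{\p}$, verification of the $(\tau_n)$-natural conditions, and minimality of the extension. First I would address the extension of the measure. The key observation is that $\tilde{\F} = \sigma(\F, \mathcal{N})$ consists precisely of sets of the form $(A \cup N_1) \setminus N_2$ where $A \in \F$ and $N_1, N_2 \in \mathcal{N}$, or more cleanly, every $\tilde{A} \in \tilde{\F}$ satisfies $\tilde{A} \,\triangle\, A \subset N$ for some $A \in \F$ and some $(\tau_n)$-negligible $N$. I would define $\tilde{\p}(\tilde{A}) = \p(A)$ for such a representative and check this is well-defined: if $A \,\triangle\, A' \subset N \cup N'$ with $N, N' \in \mathcal{N}$, then $A \,\triangle\, A'$ is itself $(\tau_n)$-negligible, hence contained in a countable union $\bigcup_n B_n$ with $B_n \in \F_{\tau_n}$, $\p(B_n) = 0$. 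Since each $B_n \in \F$ with $\p(B_n) = 0$, the set $A \,\triangle\, A'$ is a $\p$-null set in $\F$, forcing $\p(A) = \p(A')$. Countable additivity and the coincidence with $\p$ on $\F$ follow routinely, and uniqueness holds because any extension must assign measure zero to each element of $\mathcal{N}$ (being contained in a $\p$-null $\F$-set) and must agree with $\p$ on $\F$.

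Next I would verify the $(\tau_n)$-natural conditions for $(\Omega, \tilde{\F}, (\tilde{\F}_t)_{t\geq 0}, \tilde{\p})$. Right-continuity is immediate from the definition $\tilde{\F}_t = \sigma(\F_t^+, \mathcal{N})$: since $\F_t^+ = \bigcap_{s > t} \F_s^+$ and the collection $\mathcal{N}$ is fixed, one checks $\bigcap_{s>t} \tilde{\F}_s = \sigma\bigl(\bigcap_{s>t}\F_s^+, \mathcal{N}\bigr) = \sigma(\F_t^+, \mathcal{N}) = \tilde{\F}_t$, where the first equality uses that adjoining a fixed countably generated (up to the negligible structure) family commutes with the intersection. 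For $(\tau_n)$-completeness I must show every $(\tau_n)$-negligible set of the new space lies in $\tilde{\F}_0 = \sigma(\F_0^+, \mathcal{N})$. The crucial point is that the notion of negligibility is stable under the augmentation: a $\tilde{\p}$-null set $\tilde{B}_n \in \tilde{\F}_{\tau_n}$ differs from an $\F_{\tau_n}$-set of $\p$-measure zero by a $(\tau_n)$-negligible set, so any set covered by a sequence of such $\tilde{B}_n$ is again covered by a sequence from $\F_{\tau_n}$ of $\p$-measure zero, hence is $(\tau_n)$-negligible in the original sense and therefore belongs to $\mathcal{N} \subset \tilde{\F}_0$.

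The main obstacle will be the completeness step, specifically controlling the $\sigma$-algebra $\tilde{\F}_{\tau_n}$ of the augmented stopped filtration and showing that a $\tilde{\p}$-null set there reduces to the original negligibility notion. This requires a careful description of $\tilde{\F}_{\tau_n}$ in terms of $\F_{\tau_n}$ and $\mathcal{N}$, which is delicate because $\tau_n$ is itself only a stopping time for $(\F_t^+)$ after augmentation, and one must confirm that the stopped $\sigma$-algebra of the augmented filtration at $\tau_n$ is exactly $\sigma(\F_{\tau_n}^+, \mathcal{N})$ with the negligible sets entering only through the fixed family $\mathcal{N}$. Finally, minimality follows almost formally: if $(\Omega, \G, (\G_t), \q)$ is any extension of the original space satisfying the $(\tau_n)$-natural conditions, then $\q$ extends $\p$ and the $(\tau_n)$-completeness of $\G$ forces $\mathcal{N} \subset \G_0 \subset \G_t$, whence $\tilde{\F}_t = \sigma(\F_t^+, \mathcal{N}) \subset \G_t$ and $\tilde{\F} \subset \G$; uniqueness of the extension then gives $\q|_{\tilde{\F}} = \tilde{\p}$, so $(\Omega, \tilde{\F}, (\tilde{\F}_t), \tilde{\p})$ embeds in every such extension and is thus the smallest.
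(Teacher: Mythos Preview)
Your approach is essentially the same as the paper's: both rely on the characterization $\tilde{\F}=\{A:\exists A'\in\F,\ A\triangle A'\in\mathcal{N}\}$ (and the analogous description of $\tilde{\F}_t$), define $\tilde{\p}(A)=\p(A')$, and identify as the crucial technical step the equality $\tilde{\F}_{\tau_n}=\sigma(\F_{\tau_n}^+,\mathcal{N})$ needed to reduce $(\tau_n)$-negligibility in the augmented space to the original notion. The paper devotes most of its effort to this stopping-time identity, proving it first for simple stopping times by induction on the number of values and then passing to general stopping times by approximation from above; you correctly flag this as the main obstacle without spelling it out.

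One point deserves more care. Your right-continuity argument asserts $\bigcap_{s>t}\sigma(\F_s^+,\mathcal{N})=\sigma\bigl(\bigcap_{s>t}\F_s^+,\mathcal{N}\bigr)$ on the grounds that ``adjoining a fixed \ldots\ family commutes with the intersection.'' That commutation is not automatic and is in fact the content to be proved: $\mathcal{N}$ need not be countably generated, and in general $\bigcap_s\sigma(\mathcal{A}_s,\mathcal{N})$ strictly contains $\sigma(\bigcap_s\mathcal{A}_s,\mathcal{N})$. The paper handles this (and you will need to as well) via the explicit description $\tilde{\F}_s=\{A:\exists A'\in\F_s^+,\ A\triangle A'\in\mathcal{N}\}$: given $A\in\bigcap_n\tilde{\F}_{t+1/n}$ with witnesses $A_n'\in\F_{(t+1/n)+}$, one forms $A'=\limsup_n A_n'\in\F_{t+}$ and checks $A\triangle A'\in\mathcal{N}$. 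This is routine once the characterization is in hand, but it is not a one-liner.
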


\begin{proof}
We only give a sketch of the proof, because all steps except the third one (which we do in detail) follow closely the proof of Proposition 2.4.~in \cite{newkind}.
\be
\item
Define $\mathcal{E}=\{A\subset\Omega|\ \exists A'\in\F:\ A\Delta A'\in\mathcal{N}\}$. As in \cite{newkind} it is easily checked that $\mathcal{E}$ is a $\sigma$-algebra and that $\mathcal{E}=\tilde{\F}$. This implies that if $\tilde{\p}$ is a probability on $(\Omega,\tilde{\F})$ extending $\p$ we must have $\tilde{\p}(A)=\p(A')$ for $A\in\tilde{\F}$, where $A'\in\F$ satisfies $A\Delta A'\in\mathcal{N}$. Therefore, the measure $\tilde{\p}$ is unique, if it exists. Furthermore, $\tilde{\F}_t=\{A\subset\Omega|\ \exists A'\in\F^+_{t}:\ A\Delta A'\in\mathcal{N}\}$ for all $t\geq0$ as can be easily checked. 

\item
Next we show that $(\tilde{\F}_t)_{t\geq0}$ is right-continuous:\\
For this assume that $A\in\bigcap_{s>t}\tilde{\F}_s$. Therefore, $A\in\tilde{\F}_{t+1/n}$ for all $n\in\N$ and there exists $A'_n\in\F_{(t+1/n)+}$ such that $A\Delta A'_n\in\mathcal{N}$ for all $n\in\N$. Thus,
	\[A\Delta \underbrace{\left[ \bigcap_{m\geq m_0}\bigcup_{n\geq m}A_n' \right]}_{\in\F_{(t+1/m_0)+}} \in\mathcal{N}\ \forall\ m_0\in\N
\] 
and it follows that $A':=\bigcap_{m\in\N}\bigcup_{n\geq m}A_n'\in\F_{t+}$, which implies that $A\in\tilde{\F}_t$.

\item
The crucial step now is to show that $\sigma(\F^+_T,\mathcal{N})=\{A\subset\Omega|\ \exists A'\in\F^+_{T}:\ A\Delta A'\in\mathcal{N}\}$ for every $(\F_t^+)_{t\geq0}$-stopping time $T$: \\
Indeed it is well-known that $T$ can be approximated from above by a sequence of simple stopping times. Because of the right-continuity of the filtration, it is therefore enough to show the claim for every simple $(\F_t^+)$-stopping time $S$. For this assume that $S$ takes values in $\{t_1,\dots,t_k,\infty\}$ with $0\leq t_1<t_2<\dots<t_k<\infty$. Then we have
\begin{eqnarray*}
\tilde{\F}_S&=&\{A\in\tilde{\F}\ |\ A\cap\{S\leq t\}\in\tilde{\F}_t\ \forall\ t\geq0\}\\
&=&\{A\in\tilde{\F}\ |\ A\cap\{S\leq t_l\}\in\tilde{\F}_t\ \forall\ t\in [t_l,t_{l+1}) \ \forall\ l=1,\dots,k\}\\
&=&\{A\in\tilde{\F}\ |\ A\cap\{S\leq t_l\}\in\tilde{\F}_{t_l}\ \forall\ l=1,\dots,k\}\\
&=&\tilde{\F}_{S'}\cap\{A\in\tilde{\F}\ |\ A\cap\{S\leq t_1\}\in\tilde{\F}_{t_1}\}\\
&=&\{A\in\tilde{\F}_{S'}\ |\ A\cap\{S\leq t_1\}\in\tilde{\F}_{t_1}\},
\end{eqnarray*}
where $S'=S\vee t_2$. We will proceed by induction. Note that $S'$ takes only the values $\{t_2,\dots,t_k,\infty\}$ and by the induction hypothesis therefore $\tilde{\F}_{S'}=\sigma(\F_{S'}^+,\mathcal{N})$. 

Let $A\in\tilde{\F}_S$. Then $A\in\tilde{\F}_{S'}=\sigma(\F^+_{S'},\mathcal{N})$, which yields the existence of a set $A_0\in\F_{S'}^+$ such that $A\Delta A_0\in\mathcal{N}$ and $A_0\cap\{S'\leq t_l\}=A_0\cap\{S\leq t_l\}\in\F_{t_l}^+$ for all $l\in\{2,\dots,k\}$. Furthermore, since $A\cap\{S\leq t_1\}\in\tilde{\F}_{t_1}$, there exists some set $A_1\in\F^+_{t_1}$ such that \mbox{$A_1\Delta(A\cap\{S\leq t_1\})\in\mathcal{N}$}. 
Define $\ol{A}:=(A_0\cap\{S> t_1\})\cup (A_1\cap\{S\leq t_1\})$. Then $\ol{A}\in\F^+_S$:
\begin{eqnarray*}
\ol{A}\cap\{S\leq t_1\}&=&A_1\cap\{S\leq t_1\}\in\F_{t_1}^+,\\
\ol{A}\cap\{S\leq t_l\}&=& \underbrace{(A_1\cap\{S\leq t_1\})}_{\in\F^+_{t_1}} \cup\left(\underbrace{\{S>t_1\}}_{\in\F^+_{t_1}}\cap \underbrace{(A_0\cap\{S\leq t_l\})}_{\in\F_{t_l}^+}\right)\in\F^+_{t_l}\ \forall\ l=2,\dots,k.
\end{eqnarray*}
Moreover, 
\begin{eqnarray*}
\ol{A}\Delta A&=&\left[(A_0\cap\{S> t_1\})\cup (A_1\cap\{S\leq t_1\})\right]\Delta A\\ 
&=&\left(\left[(A_0\cap\{S> t_1\})\cup (A_1\cap\{S\leq t_1\})\right]\bs A\right) \cup \left(A\bs\left[(A_0\cap\{S> t_1\})\cup (A_1\cap\{S\leq t_1\})\right]\right)\\
&\subset&(A_0\bs A)\cup \left[A_1\bs (A\cap\{S\leq t_1\})\right]\cup \left[A\bs(A_0\cup A_1)\right]\cup \left[A\bs (A_1\cup\{S>t_1\})\right]\cup\left[A\bs (A_0\cup\{S\leq t_1\})\right]\\
&\subset&(A_0\bs A)\cup \left[A_1\bs (A\cap\{S\leq t_1\})\right]\cup \left[A\bs(A_0\cup A_1)\right]\cup \left[A\bs (A_1\cup\{S>t_1\})\right]\cup\left[A\bs (A_0\cup\{S\leq t_1\})\right]\\
&\subset&(A_0\bs A)\cup \left[A_1\bs (A\cap\{S\leq t_1\})\right]\cup \left[(A\cap\{S\leq t_1\})\bs A_1)\right]\cup (A\bs A_0)\\
&=& \left(A\Delta A_0\right) \cup \left[A_1\Delta(A\cap\{S\leq t_1\})\right]\in\mathcal{N}.
\end{eqnarray*}

Therefore, the claim follows by induction, once we show that it holds for the stopping time $S^*\in\{t_1,\infty\}$. For this note that
\begin{eqnarray*}
\tilde{\F}_{S^*}&=&\{A\in\tilde{\F}\ |\ A\cap\{S^*\leq t_1\}\in\tilde{\F}_{t_1}\}.
\end{eqnarray*}
Let $B\in\tilde{\F}_{S^*}$. Then there exists $B_1\in\F^+_{t_1}$ such that $B_1\subset\{S^*\leq t_1\}$ and $B_1\Delta(B\cap\{S^*\leq t_1\})\in\mathcal{N}$. Also, there exists $B_0\in\F$ such that $B\Delta B_0\in\mathcal{N}$. Now define $\ol{B}=B_1\cup(B_0\cap\{S^*>t_1\})\in\F$. Then $\ol{B}\cap\{S^*\leq t_1\}=B_1\in\F_{t_1}^+$ and 
\begin{eqnarray*}
\ol{B}\Delta B &=& (B_1\cup(B_0\cap\{S^*>t_1\}))\Delta B\\ 
&=& (B_1\cup(B_0\cap\{S^*>t_1\}))\Delta((B\cap\{S^*\leq t_1\}) \cup (B\cap\{S^*>t_1\}))\\
&\subset&\underbrace{(B_1\Delta(B\cap\{S^*\leq t_1\})}_{\in\mathcal{N}}\cup(\underbrace{(B_0\Delta B)}_{\in\mathcal{N}}\cap\{S^*>t_1\})\in\mathcal{N}.
\end{eqnarray*}
Therefore, $\ol{B}\in\F^+_{S^*}$ and $\tilde{\F}_{S^*}\subset\sigma(\F_{S^*}^+,\mathcal{N})$. The inclusion $\tilde{\F}_{S^*}\supset\sigma(\F_{S^*}^+,\mathcal{N})$ is trivial.\\
Finally, let $T$ be an arbitrary $(\F_t^+)_{t\geq0}$-stopping time and $(T_n)_{n\in\N}$ a decreasing sequence of simple stopping times such that $T_n\ra T$ from above. Then, since the filtration is right-continuous,
	\[\tilde{\F}_T=\bigcap_{n\in\N}\tilde{\F}_{T_n}=\bigcap_{n\in\N}\sigma(\F^+_{T_n},\mathcal{N})\supset\sigma(\F_T^+,\mathcal{N}).
\]
To show the reverse inclusion take any set $A\in\tilde{\F}_T$. From the above equality we get for each $n\in\N$ the existence of a set $A_n\in\F^+_{T_n}$ such that $A_n\Delta A\in\mathcal{N}$. We define $A'_n=\bigcup_{m\geq n}A_m\in\F_{T_n}^+$ for all $n\in\N$. Note that $(A'_n)_{n\in\N}$ is decreasing and that
	\[A_n'\Delta A =\left(\bigcup_{m\geq n}A_m\right)\Delta A=\left(\bigcup_{m\geq n}A_m\bs A\right)\cup\left( A\bs\bigcup_{m\geq n}A_m\right)=
\left(\bigcup_{m\geq n}\underbrace{A_m\bs A}_{\in\mathcal{N}}\right)\cup\left( \bigcap_{m\geq n}\underbrace{A\bs A_m}_{\in\mathcal{N}}\right) \in\mathcal{N}.
\]
By the right-continuity of the filtration, we have 
	\[A':=\bigcap_{n\in\N}\bigcup_{m\geq n}A_m=\bigcap_{n\in\N}A_n'\in\F_T^+.
\]
It remains to show that $A'\Delta A\in\mathcal{N}$. Indeed:
	\[A'\Delta A=\left(\bigcap_{n\in\N}A_n'\right)\Delta A=
	\left(\bigcap_{n\in\N}A_n'\bs A\right)\cup\left(A\bs\bigcap_{n\in\N}A_n'\right)=
	\left(\bigcap_{n\in\N}\underbrace{A_n'\bs A}_{\in\mathcal{N}}\right)\cup\left(\bigcup_{n\in\N}\underbrace{A\bs A_n'}_{\in\mathcal{N}}\right)\in\mathcal{N}.
\]
Therefore, $A\in\sigma(\F^+_T,\mathcal{N})$ and $\tilde{\F}_T\subset\sigma(\F^+_T,\mathcal{N})$.
 
\item
To show existence of the $(\tau_n)$-augmentation we define for $A\in\tilde{\F}$, $\tilde{\p}(A):=\p(A')$, where $A'\in\F$ satisfies $A\Delta A'\in\mathcal{N}$. This definition does not depend on the particular choice of $A'$. Obviously, $\tilde{\p}|_{\F}=\p$ and it is easily checked that $\tilde{\p}$ is $\sigma$-additive.
It remains to verify that $(\Omega,\tilde{\F},(\tilde{\F}_t)_{t\geq0},\tilde{\p})$ satisfies the $(\tau_n)$-natural conditions: If $A\in\tilde{\F}$ is $(\tau_n)$-negligible, then there exist $(B_n)_{n\in\N}$ such that $B_n\in\tilde{\F}_{\tau_n}$, $\tilde{\p}(B_n)=0$ for all $n\in\N$ and $A\subset\bigcup_{n\in\N}B_n$. Since $B_n\in\tilde{\F}_{\tau_n}$, there exists $B_n'\in\F^+_{\tau_n}$ such that $B_n\Delta B_n'\in\mathcal{N}$. Thus, $\p(B_n')=\tilde{\p}(B_n)=0$ and $B_n'\in\mathcal{N}$, which implies that also $B_n=(B_n'\cup(B_n\backslash B_n'))\backslash(B_n'\backslash B_n)\in\mathcal{N}$. It follows that $A\subset\bigcup_{n\in\N}B_n\in\mathcal{N}\subset\tilde{\F}_0$. 
Finally, it is easy to see that $(\Omega,\tilde{\F},(\tilde{\F}_t)_{t\geq0},\tilde{\p})$ is the smallest extension of $(\Omega,\F,(\F_t)_{t\geq0},\p)$ that satisfies the $(\tau_n)$-natural assumptions.
\ee
\end{proof}

\subs{Martingales under the $(\tau_n)$-natural augmentation}

We have the following simple but important result which shows that martingale properties of stochastic processes are not changed when taking the $(\tau_n)$-natural augmentation.

\begin{lem}\label{exp} {\bf (similar to Prop.~4.6~in \cite{newkind})}
Let $(\Omega,\F,(\F_t)_{t\geq0},\p)$ be a filtered probability space and $(\Omega,\tilde{\F},(\tilde{\F}_t)_{t\geq0},\tilde{\p})$ its $(\tau_n)$-augmentation with respect to an increasing sequence of $(\F_t)$-stopping times $(\tau_n)_{n\in\N}$. Let $X$ be an $\F$-measurable $\p$-integrable random variable. Then $X$ is also integrable with respect to $\tilde{\p}$ and $\E^{\tilde{\p}} X=\E^\p X$. Moreover, 
$\E^{\tilde{\p}}(X|\tilde{\F}_t)=\E^\p(X|\F_t)$ $\tilde{\p}$-a.s.~for all $t\geq0$.
\end{lem}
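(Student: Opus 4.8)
The plan is to derive everything from two facts established earlier: that $\tilde{\p}$ restricts to $\p$ on $\F$, and that $\tilde{\F}_t=\{A\subset\Omega\ |\ \exists A'\in\F^+_t:\ A\Delta A'\in\mathcal{N}\}$. I will use throughout that every $N\in\mathcal{N}$ is $\tilde{\p}$-null: taking $A=N$ and $A'=\emptyset$ in the defining formula $\tilde{\p}(A)=\p(A')$ gives $\tilde{\p}(N)=\p(\emptyset)=0$. Thus symmetric differences lying in $\mathcal{N}$ may be discarded freely under $\tilde{\p}$.

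For the first assertion I would first take $X\geq0$. Since $X$ is $\F$-measurable it is the increasing limit of nonnegative simple $\F$-measurable functions, and for each such $\sum_i c_i\indic_{A_i}$ with $A_i\in\F$ one has $\E^{\tilde{\p}}\sum_i c_i\indic_{A_i}=\sum_i c_i\tilde{\p}(A_i)=\sum_i c_i\p(A_i)=\E^{\p}\sum_i c_i\indic_{A_i}$ because $\tilde{\p}=\p$ on $\F$. Monotone convergence then gives $\E^{\tilde{\p}}X=\E^{\p}X$ in $[0,\infty]$, so $\p$-integrability is preserved and the two expectations agree; the general case follows by splitting $X=X^+-X^-$.

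For the conditional expectation I would propose the candidate $Y:=\E^{\p}(X|\F_t)$ and check that it is a version of $\E^{\tilde{\p}}(X|\tilde{\F}_t)$. Measurability is immediate, since $Y$ is $\F_t$-measurable, hence $\tilde{\F}_t$-measurable, and $Y$ is $\tilde{\p}$-integrable by the first part. It then remains to verify the averaging property $\int_A Y\,d\tilde{\p}=\int_A X\,d\tilde{\p}$ for every $A\in\tilde{\F}_t$. Given such $A$, I pick $A'\in\F^+_t$ with $A\Delta A'\in\mathcal{N}$; as this symmetric difference is $\tilde{\p}$-null, neither integral changes when $A$ is replaced by $A'$. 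Both $\indic_{A'}Y$ and $\indic_{A'}X$ are $\F$-measurable and $\p$-integrable, so by the first part their $\tilde{\p}$- and $\p$-integrals coincide, and the claim reduces to $\int_{A'}Y\,d\p=\int_{A'}X\,d\p$.

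This last identity is the step I expect to be the main obstacle. The defining property of $\E^{\p}(\,\cdot\,|\F_t)$ yields it at once when $A'\in\F_t$, but here $A'$ lies only in $\F^+_t$, so one genuinely needs $\E^{\p}(X|\F^+_t)=\E^{\p}(X|\F_t)$ $\p$-a.s. When the original filtration is right-continuous, $\F_t=\F^+_t$ and the argument closes verbatim. In general the clean formulation is $\E^{\tilde{\p}}(X|\tilde{\F}_t)=\E^{\p}(X|\F^+_t)$, obtained by instead taking the candidate $Y=\E^{\p}(X|\F^+_t)$: then the reduced identity $\int_{A'}Y\,d\p=\int_{A'}X\,d\p$ for $A'\in\F^+_t$ is exactly the defining property of the conditional expectation, and the right-continuity of $(\tilde{\F}_t)$ proved in the theorem guarantees that $Y$ is determined up to a $\tilde{\p}$-null set.
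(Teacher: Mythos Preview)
Your argument is correct and follows the standard line; the paper itself omits the proof entirely, deferring to Proposition~4.5 in \cite{newkind}, whose proof proceeds in precisely the way you outline (simple-function approximation for the expectation, then verification of the averaging property for the conditional expectation by reducing a set in the augmented $\sigma$-algebra to one in the original via a negligible symmetric difference).

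Your closing observation about $\F_t$ versus $\F^+_t$ is a genuine refinement of the \emph{statement} rather than a gap in your proof: as the lemma is literally written, the identity $\E^{\tilde{\p}}(X\mid\tilde{\F}_t)=\E^{\p}(X\mid\F_t)$ need not hold unless $(\F_t)_{t\geq0}$ is already right-continuous, and the robust formulation is $\E^{\tilde{\p}}(X\mid\tilde{\F}_t)=\E^{\p}(X\mid\F^+_t)$, exactly as you note. In the paper's applications (the subsequent corollary and the standard-system setting of Section~\ref{newmeas}) one works with $(\F^+_t)_{t\geq0}$ anyway, so the distinction is immaterial there, but your caveat is the correct way to state the result in full generality.
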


The proof is omitted, since it is exactly the same as the proof of Proposition 4.6 in \cite{newkind}.

\begin{cor}\label{local} Let $(\Omega,\F,(\F_t)_{t\geq0},\p)$ be a filtered probability space and $(\Omega,\tilde{\F},(\tilde{\F}_t)_{t\geq0},\tilde{\p})$ its $(\tau_n)$-augmentation, where $(\tau_n)_{n\in\N}$ is an increasing sequence of $(\F_t)$-stopping times.
\be
\item 
If $(X_t)_{t\geq 0}$ is an $\left\{(\F_t)_{t\geq0},\p\right\}$-(super-)martingale, then it is also an $\left\{(\tilde{\F}_t)_{t\geq0},\tilde{\p}\right\}$-(super)martingale.
\item If $(X_t)_{t\geq 0}$ is a local $\left\{(\F_t)_{t\geq0},\p\right\}$-martingale, then it is also a local $\left\{(\tilde{\F}_t)_{t\geq0},\tilde{\p}\right\}$-martingale.
\ee
\end{cor}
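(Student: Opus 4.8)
The plan is to obtain both statements as direct consequences of Lemma~\ref{exp}, which already identifies the conditional expectations under $\tilde{\p}$ with those under $\p$. Throughout, the one genuine point I would isolate is the transfer of almost-sure statements from $\p$ to $\tilde{\p}$: since the measure $\tilde{\p}$ coincides with $\p$ on all of $\F$, every $\p$-null set lying in $\F$ is automatically $\tilde{\p}$-null. This one-line observation is what I expect to do all the real work.

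For statement~(1), suppose $(X_t)_{t\geq0}$ is an $\{(\F_t),\p\}$-martingale. First I would check adaptedness and integrability, both of which are immediate: each $X_t$ is $\F_t$-measurable, hence $\tilde{\F}_t$-measurable because $\F_t\subset\F_t^+\subset\tilde{\F}_t$, and $\tilde{\p}$-integrability is exactly the first assertion of Lemma~\ref{exp}. For the martingale identity I would fix $s\leq t$ and apply Lemma~\ref{exp} to the $\F$-measurable, $\p$-integrable random variable $X_t$, which gives $\E^{\tilde{\p}}(X_t\mid\tilde{\F}_s)=\E^\p(X_t\mid\F_s)$ $\tilde{\p}$-a.s. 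The martingale property says $\E^\p(X_t\mid\F_s)=X_s$ $\p$-a.s., i.e.\ the exceptional set $N=\{\E^\p(X_t\mid\F_s)\neq X_s\}$ lies in $\F_s\subset\F$ and has $\p(N)=0$; by the transfer observation $\tilde{\p}(N)=\p(N)=0$, so the identity $\E^{\tilde{\p}}(X_t\mid\tilde{\F}_s)=X_s$ holds $\tilde{\p}$-a.s. The supermartingale case is verbatim the same with the equality in the last step replaced by the inequality $\E^\p(X_t\mid\F_s)\leq X_s$.

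For statement~(2), let $(X_t)_{t\geq0}$ be a local $\{(\F_t),\p\}$-martingale with localizing sequence $(\sigma_k)_{k\in\N}$ of $(\F_t)$-stopping times with $\sigma_k\uparrow\infty$ $\p$-a.s. I would first note that each $\sigma_k$ is also an $(\tilde{\F}_t)$-stopping time, since $\{\sigma_k\leq t\}\in\F_t\subset\tilde{\F}_t$. Each stopped process $X^{\sigma_k}$ is by hypothesis an $\{(\F_t),\p\}$-martingale, so by part~(1) it is an $\{(\tilde{\F}_t),\tilde{\p}\}$-martingale. It remains to see that $(\sigma_k)$ still localizes under $\tilde{\p}$: the event $\{\lim_k\sigma_k=\infty\}$ is $\F$-measurable (each $\sigma_k$ is $\F$-measurable) and has $\p$-probability one, so again by the transfer observation it has $\tilde{\p}$-probability one. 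Hence $(\sigma_k)$ is a localizing sequence for $X$ under $\tilde{\p}$, and $X$ is a local $\{(\tilde{\F}_t),\tilde{\p}\}$-martingale. The main obstacle, modest as it is, is precisely the measure-change of almost-sure statements; once $\tilde{\p}|_\F=\p$ is invoked everything else is bookkeeping built on Lemma~\ref{exp}.
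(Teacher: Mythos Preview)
Your proof is correct and follows essentially the same route as the paper: adaptedness is immediate from $\F_t\subset\tilde{\F}_t$, integrability and the conditional-expectation identity come from Lemma~\ref{exp}, and for the local case you reuse the same $(\F_t)$-localizing sequence, checking via $\tilde{\p}|_\F=\p$ that $\sigma_k\uparrow\infty$ holds $\tilde{\p}$-a.s. The only cosmetic difference is that you make the transfer of $\p$-null sets in $\F$ to $\tilde{\p}$-null sets explicit, whereas the paper leaves this implicit.
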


\begin{proof} Obviously, $(X_t)_{t\geq0}$ is $(\tilde{\F}_t)_{t\geq0}$-adapted and by Lemma \ref{exp} $X_t$ is integrable for all $t\geq0$.
\be
\item Furthermore, $\E^{\tilde{\p}}(X_t|\tilde{\F}_s)=\E^\p(X_t|\F_s)\stackrel{(>)}{=} X_s$ for all $s\leq t$ by Lemma \ref{exp}.
\item Let $(\sigma_n)_{n\in\N}$ be a localizing sequence for $X$ under $\p$. Since $(\sigma_n)_{n\in\N}$ are $(\F_t)_{t\geq0}$-stopping times, $\{\sigma_n\leq t\}\in\F_t\subset\tilde{\F}_t$ for all $t\geq0$ and $\tilde{\p}(\sigma_n\ra\infty)=\p(\sigma_n\ra\infty)=1$, thus $(\sigma_n)_{n\in\N}$ is also a localizing sequence for $X$ with respect to $(\Omega,\tilde{\F},(\tilde{\F}_t)_{t\geq0},\tilde{\p})$. By 1.~and Lemma \ref{exp} $(X_{t\wedge\sigma_n})_{t\geq0}$ is a uniformly integrable $\left\{(\tilde{\F}_t)_{t\geq0},\tilde{\p}\right\}$-martingale for all $n\in\N$.
\ee
\end{proof}

In the following subsection we show that one can do even better: in fact, it is possible to construct for any martingale an (adapted) version with regular trajectories for all $\omega\in\Omega$ up to time $\tau=\lim_{n\ra\infty}\tau_n$.

\subs{Existence of regular versions of trajectories up to time $\tau$}

As in \cite{newkind} the following lemma, which relates the $(\tau_n)$-natural conditions to the usual assumptions, is the main tool for establishing classical results from stochastic calculus under the $(\tau_n)$-natural conditions.

\begin{lem}\label{key}{\bf (similar to Prop.~2.5~in \cite{newkind})}
Assume that the filtered probability space $(\Omega,\F,(\F_t)_{t\geq0},\p)$ satisfies the $(\tau_n)$-natural assumptions for an increasing sequence of stopping times $(\tau_n)_{n\in\N}$. Then for all $n\in\N$ the space $(\Omega,\F_{\tau_n},(\F_{t\wedge\tau_n})_{t\geq0},\p)$ satisfies the usual assumptions.
\end{lem}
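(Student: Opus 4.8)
The plan is to verify the two defining properties of the usual conditions for the stopped space $(\Omega,\F_{\tau_n},(\F_{t\wedge\tau_n})_{t\geq0},\p)$ separately, namely right-continuity of the stopped filtration and the inclusion of all $\p$-null sets of $\F_{\tau_n}$ in $\F_0=\F_{0\wedge\tau_n}$.

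The key algebraic fact I would invoke is the standard identity $\F_{t\wedge\tau_n}=\F_t\cap\F_{\tau_n}$, valid for the deterministic time $t$ and the stopping time $\tau_n$ (a special case of $\F_{S\wedge T}=\F_S\cap\F_T$). Granting this, right-continuity of $(\F_{t\wedge\tau_n})_{t\geq0}$ reduces to the right-continuity of $(\F_t)_{t\geq0}$: for each $t\geq0$ one computes $\bigcap_{s>t}\F_{s\wedge\tau_n}=\bigcap_{s>t}(\F_s\cap\F_{\tau_n})=\bigl(\bigcap_{s>t}\F_s\bigr)\cap\F_{\tau_n}=\F_{t+}\cap\F_{\tau_n}=\F_t\cap\F_{\tau_n}=\F_{t\wedge\tau_n}$, where the penultimate equality uses that $(\F_t)_{t\geq0}$ is right-continuous (part of the $(\tau_n)$-natural assumptions). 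Since the reverse inclusion is trivial, this settles right-continuity.

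For completeness, I would take an arbitrary $\p$-null set $N$ of $\F_{\tau_n}$, that is $N\subseteq N'$ with $N'\in\F_{\tau_n}$ and $\p(N')=0$, and show that $N$ is in fact $(\tau_n)$-negligible. Here the crucial observation is the monotonicity $\F_{\tau_n}\subseteq\F_{\tau_m}$ for all $m\geq n$, which holds because $(\tau_m)_{m\in\N}$ is increasing. Setting $B_m=\emptyset$ for $m<n$ and $B_m=N'$ for $m\geq n$ yields a sequence with $B_m\in\F_{\tau_m}$, $\p(B_m)=0$ and $N\subseteq N'=\bigcup_{m\in\N}B_m$, so $N$ is $(\tau_n)$-negligible by definition. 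The $(\tau_n)$-completeness hypothesis then places $N$ in $\F_0$, which simultaneously shows that $(\Omega,\F_{\tau_n},\p)$ is complete and that $\F_0$ contains all its $\p$-null sets. Combined with the right-continuity established above, this gives the usual conditions.

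The argument is short, and I do not expect a serious obstacle: the only point requiring care is the identity $\F_{t\wedge\tau_n}=\F_t\cap\F_{\tau_n}$, and more conceptually the realization that every $\p$-null set of $\F_{\tau_n}$ is already $(\tau_n)$-negligible via the stabilized sequence $B_m=N'$. The genuine difficulty has been front-loaded into the augmentation theorem, whose conclusion (right-continuity together with $(\tau_n)$-completeness) is exactly what feeds the two steps above, mirroring Prop.~2.5 in \cite{newkind} in the special case $\tau_n=n$.
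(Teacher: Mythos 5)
Your proof is correct and follows essentially the same route as the paper: the heart of the paper's (very short) proof is exactly your second step, namely that any $\p$-null set of $\F_{\tau_n}$ is $(\tau_n)$-negligible (using $\F_{\tau_n}\subseteq\F_{\tau_m}$ for $m\geq n$), hence lies in $\F_0$ by the assumed $(\tau_n)$-completeness. The only difference is cosmetic: the paper treats the right-continuity of $(\F_{t\wedge\tau_n})_{t\geq0}$ as immediate and does not comment on it, whereas you verify it explicitly via the identity $\F_{t\wedge\tau_n}=\F_t\cap\F_{\tau_n}$, which is a sound (and slightly more careful) way to close that gap.
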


\begin{proof}
Let $A$ be an $\F_{\tau_n}$-negligible set, i.e.~there exists $B\in\F_{\tau_n}$ such that $A\subset B$ and $\p(B)=0$. Thus, $A$ is $(\tau_n)$-negligible with respect to $(\Omega,\F,(\F_t)_{t\geq0},\p)$, which is assumed to be $(\tau_n)$-complete. Therefore, $A\in\F_0$.
\end{proof}

For the rest of this subsection let $(\Omega,\F,(\F_t)_{t\geq0},\p)$ be a filtered probability space that satisfies the $(\tau_n)$-natural assumptions for an increasing sequence of stopping times $(\tau_n)_{n\in\N}$. Denote $\tau=\lim_{n\ra\infty}\tau_n$. Then on the subspace $(\Omega,\F_{\tau-},\p)$ many classical results from stochastic analysis can be proven to be true in a similar way as it is done in section 3 of \cite{newkind} under the natural assumptions. As an illustration of the usefulness of the $(\tau_n)$-usual assumptions we prove the existence of nice versions on $[0,\tau)$ below.

\begin{thm}{\bf (similar to Prop.~3.1 in \cite{newkind})}
Let $(X_t)_{t\geq0}$ be a supermartingale with respect to $(\Omega,\F,(\F_t)_{t\geq0},\p)$. If $t\mapsto\E^\p X_{t\wedge\tau_n}$ is right-continuous for all $n\in\N$, then $(X_t)_{0\leq t<\tau}$ admits a c\`adl\`ag modification on $(\Omega,\F_{\tau-},(\F_{t\wedge\tau-})_{t\geq0},\p)$, which is $\p$-a.s.~unique.
\end{thm}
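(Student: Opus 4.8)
The plan is to reduce the statement on $[0,\tau)$ to a sequence of statements on the stopped spaces $(\Omega,\F_{\tau_n},(\F_{t\wedge\tau_n})_{t\geq0},\p)$, each of which satisfies the \emph{usual} assumptions by Lemma~\ref{key}, and then to glue the resulting c\`adl\`ag modifications together consistently. First I would fix $n$ and consider the stopped process $(X_{t\wedge\tau_n})_{t\geq0}$. Since $X$ is an $(\F_t)$-supermartingale and $\tau_n$ is a stopping time, the optional stopping theorem gives that $(X_{t\wedge\tau_n})_{t\geq0}$ is a supermartingale for the filtration $(\F_{t\wedge\tau_n})_{t\geq0}$; its expectation function $t\mapsto\E^\p X_{t\wedge\tau_n}$ is right-continuous by hypothesis. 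On this stopped filtered space the usual assumptions hold, so the classical supermartingale regularization theorem (exactly as invoked in Prop.~3.1 of \cite{newkind}) applies and yields a c\`adl\`ag modification $Y^{(n)}$ of $(X_{t\wedge\tau_n})_{t\geq0}$, unique up to $\p$-indistinguishability, adapted to the augmented stopped filtration.

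Next I would establish consistency between the modifications for different indices. Because $(\tau_n)$ is increasing, on the event $\{t\le\tau_n\}$ one has $X_{t\wedge\tau_n}=X_{t\wedge\tau_{n+1}}$, so $Y^{(n)}$ and $Y^{(n+1)}$ are both c\`adl\`ag modifications of the same process on $[0,\tau_n]$; by the $\p$-a.s.~uniqueness of c\`adl\`ag modifications they agree, up to an evanescent set, on $\{t\le\tau_n\}$. The crucial point is that under the $(\tau_n)$-natural conditions this exceptional set lies in $\F_0$: it is $(\tau_n)$-negligible (being contained in an $\F_{\tau_n}$-null set) and hence, by $(\tau_n)$-completeness, is genuinely null and contained in the initial $\sigma$-algebra. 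I would then define a single process $Y$ on $[0,\tau)$ by setting $Y_t=Y^{(n)}_t$ on $\{t<\tau_n\}$; consistency makes this well defined off an evanescent set, and $Y$ inherits c\`adl\`ag paths on $[0,\tau)$ for every $\omega$ outside that set (which we absorb into $\F_0$). Adaptedness to $(\F_{t\wedge\tau-})_{t\ge0}$ and the modification property $Y_t=X_t$ $\p$-a.s.~on $\{t<\tau\}$ follow from the corresponding properties of each $Y^{(n)}$ together with $\{t<\tau\}=\bigcup_n\{t<\tau_n\}$.

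Finally, uniqueness on $(\Omega,\F_{\tau-},(\F_{t\wedge\tau-})_{t\ge0},\p)$ would follow by the same localization argument: any two c\`adl\`ag modifications of $X$ on $[0,\tau)$ restrict to c\`adl\`ag modifications of $X_{\cdot\wedge\tau_n}$, which are indistinguishable on $[0,\tau_n]$ for each $n$, and taking the union over $n$ gives indistinguishability on $[0,\tau)$, the exceptional set again being $(\tau_n)$-negligible and hence null.

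I expect the main obstacle to be the gluing step rather than the regularization on each piece. The delicate issue is controlling the countable union of exceptional sets: one must verify that each discrepancy set between $Y^{(n)}$ and $Y^{(n+1)}$ is not merely $\p$-null but $(\tau_n)$-negligible, so that its membership in $\tilde\F_0$ (and hence the adaptedness and pathwise regularity of the glued process for \emph{every} relevant $\omega$) is guaranteed precisely by the $(\tau_n)$-natural conditions. This is exactly the place where the new augmentation does the work that the ordinary usual conditions cannot do, since we are forbidden from throwing arbitrary $\F_\infty$-null sets into $\F_0$; care is needed to keep every exceptional event inside the family $\mathcal N$ of $(\tau_n)$-negligible sets.
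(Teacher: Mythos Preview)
Your proposal is correct and matches the paper's proof essentially line for line: localize via Lemma~\ref{key} to obtain the usual conditions on each $(\F_{t\wedge\tau_n})_{t\ge0}$, apply the classical c\`adl\`ag regularization theorem there, and glue, using $(\tau_n)$-negligibility of the exceptional set to place it in $\F_0$. The only detail the paper makes explicit that you leave implicit is that one first passes to a right-continuous adapted version of $X$ (available since the filtration is right-continuous, via Lemma~(1.1) of \cite{exit}) before invoking optional sampling, since Doob's theorem in continuous time requires some path regularity of the supermartingale.
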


\begin{proof}
Since the filtration is in particular assumed to be right-continuous, there exists a right-continuous adapted version of $(X_t)_{t\geq0}$ by Lemma (1.1) in \cite{exit}. Denote it by $\ol{X}$. Then by Doob's optional sampling theorem the process $(\ol{X}_{t\wedge\tau_n})_{t\geq0}$ is also a right-continuous supermartingale for every $n\in\N$, which is adapted to $(\F_{t\wedge\tau_n})_{t\geq0}$. But the space $(\Omega,\F_{\tau_n},(\F_{t\wedge\tau_n})_{t\geq0},\p)$ satisfies the usual conditions by Lemma \ref{key}. Thus, $(\ol{X}_{t\wedge\tau_n})_{t\geq0}$ admits a c\`adl\`ag modification, since $t\mapsto\E^\p(\ol{X}_{t\wedge\tau_n})=\E^\p(X_{t\wedge\tau_n})$ is right-continuous. Let us denote this modification by $\tilde{X}_t^{(n)}$, which is unique up to indistinguishability. Then $\tilde{X}_t^{(n)}$ is also a modification of $(X_{t\wedge\tau_n})_{t\geq0}$, and the uniqueness implies that the family $(\tilde{X}^{(n)})_{n\in\N}$ is consistent, i.e.~$\tilde{X}_t^{(n+k)}\indic_{\{t\leq \tau_n<\infty\}}=\tilde{X}_t^{(n)}\indic_{\{t\leq \tau_n<\infty\}}$ $\p$-almost surely for all $t\geq0$ and $n,k\in\N$. We define the set
	\[N:=\left\{\omega\in\Omega:\ \exists\ n,m\in\N,\ n\geq m,\ \exists\ t\in[0,\tau_m]\cap\R_+\text{ s.t.~}\tilde{X}_s^{(n)}(\omega)\neq\tilde{X}_t^{(m)}(\omega)\right\},
\]
which is $(\tau_n)$-negligible. Therefore, $N\in\F_0$ and $\p(N)=0$. Defining the process $(\tilde{X}_t)_{0\leq t<\tau}$ on $(\Omega,\F_{\tau-},\p)$ via 
	\[\tilde{X}_t(\omega)=\begin{cases}\tilde{X}_t^{(n)}(\omega)&\text{, if }\omega\not\in N \\ 0&\text{, if }\omega\in N\end{cases}
\]
for $t\in[0,\tau_n]$, we have constructed the desired c\`adl\`ag modification of $(X_t)_{0\leq t<\tau}$ on $(\Omega,\F_{\tau-},\p)$.
\end{proof}

\vskip12pt

\begin{thm}{\bf (similar to Prop.~3.3 in \cite{newkind})}
Let $(X_t)_{t\geq0}$ be an adapted process on the space $(\Omega,\F,(\F)_{t\geq0},\p)$ and assume that there exists a c\`adl\`ag version $(Y_t)_{0\leq t<\tau}$ of $(X_t)_{0\leq t<\tau}$, i.e.~for all $t\geq0$ s.t.~$\p(\tau>t)>0$ we have $\p(Y_t\neq X_t\ |\ t<\tau)=0$. Then there exists a c\`adl\`ag and adapted version of $(X_t)_{0\leq t<\tau}$ on $(\Omega,\F_{\tau-},(\F_{t\wedge\tau-})_{t\geq0},\p)$, which is indistinguishable from $(Y_t)_{0\leq t<\tau}$ on $(\Omega,\F_{\tau-},\p)$.
\end{thm}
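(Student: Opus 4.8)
The plan is to follow the same localisation-and-gluing scheme as in the proof of the preceding theorem, replacing the input ``a right-continuous version exists'' by the classical fact that, \emph{under the usual conditions, a c\`adl\`ag modification of an adapted process is again adapted}. The point of that classical statement is that it is proved by evaluating the process only at fixed times: if $Z$ is adapted and $W$ is a c\`adl\`ag modification, then $W_t=\lim_{s\downarrow t}W_s$ along rationals $s>t$, each $W_s$ agrees $\p$-a.s.\ with the $\F_s$-measurable variable $Z_s$, and completeness together with right-continuity of the filtration upgrades this to $\F_t$-measurability of $W_t$. No evaluation of $Y$ at a random time is needed, which is essential since we are only given the fixed-time agreement $\p(Y_t\neq X_t\mid t<\tau)=0$.

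Concretely, I would first fix $n$ and pass to the space $(\Omega,\F_{\tau_n},(\F_{t\wedge\tau_n})_{t\geq0},\p)$, which satisfies the usual conditions by Lemma~\ref{key}. On this space the classical statement produces a c\`adl\`ag process $\tilde X^{(n)}$, adapted to $(\F_{t\wedge\tau_n})_{t\geq0}$, that is a modification of $X$ on $[0,\tau_n]$ and is indistinguishable from $Y$ there; by the usual conditions it is unique up to indistinguishability. The uniqueness forces the family $(\tilde X^{(n)})_{n\in\N}$ to be consistent, i.e.\ $\tilde X^{(n+k)}_t\indic_{\{t\leq\tau_n<\infty\}}=\tilde X^{(n)}_t\indic_{\{t\leq\tau_n<\infty\}}$ $\p$-a.s.\ for all $t$ and all $n,k\in\N$.

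Exactly as in the previous proof, the set $N$ on which consistency fails is $(\tau_n)$-negligible, hence $N\in\F_0$ and $\p(N)=0$ by $(\tau_n)$-completeness; setting $\tilde X_t:=\tilde X^{(n)}_t$ on $[0,\tau_n]$ for $\omega\notin N$ and $\tilde X_t:=0$ on $N$ gives a process on $[0,\tau)$ that is c\`adl\`ag for every $\omega$. It is adapted to $(\F_{t\wedge\tau-})_{t\geq0}$ because on $\{t<\tau_n\}$ one has $\tilde X_t=\tilde X^{(n)}_t$, which is $\F_{t\wedge\tau_n}$-measurable, and letting $n\ra\infty$ assembles this into measurability with respect to $\F_{t\wedge\tau-}$; and it is indistinguishable from $Y$ on $(\Omega,\F_{\tau-},\p)$ since the two agree off $N$ on each $[0,\tau_n]$.

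The delicate point---and the only place where the argument genuinely differs from the classical one---is the bookkeeping of the exceptional null sets. Under the mere $(\tau_n)$-natural conditions I cannot appeal to ordinary completeness of $\F$, so each null set discarded in the classical proof (the sets $\{Y_s\neq X_s\}\cap\{s<\tau_n\}$ and the consistency-failure set $N$) must be shown to be \emph{$(\tau_n)$-negligible}, that is, covered by a sequence of $\F_{\tau_n}$-null sets, so that it is absorbed into $\F_0$ and the final process is adapted for \emph{every} $\omega$ rather than merely $\p$-almost surely. Verifying this negligibility at each level $n$---using tools such as the identity $A\cap\{s<\tau_n\}\in\F_{\tau_n}$ for $A\in\F_s$ together with $\tau_n\uparrow\tau$---and then reassembling the per-level $\F_{t\wedge\tau_n}$-adaptedness into adaptedness with respect to $(\F_{t\wedge\tau-})_{t\geq0}$ is the main obstacle; the remaining steps are routine and parallel the preceding theorem.
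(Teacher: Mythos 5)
Your plan defers exactly the step that constitutes the theorem, and the route you sketch for that step cannot get started. The classical fact you invoke at level $n$ (under the usual conditions, a c\`adl\`ag modification of an adapted process is adapted) does not \emph{produce} a modification: it takes as input an already existing c\`adl\`ag process agreeing with the given one at fixed times, and merely upgrades it to an adapted one. In the preceding supermartingale theorem the per-level input is supplied by Doob's regularization theorem; here there is no such theorem for a general adapted process, so the input must come from $Y$, say as $Y_{t\wedge\tau_n}$, and then both hypotheses fail. First, agreement of $Y_{t\wedge\tau_n}$ with $X_{t\wedge\tau_n}$ at a fixed time $t$ requires $Y_{\tau_n}=X_{\tau_n}$ a.s.\ on $\{\tau_n\leq t\}$, i.e.\ agreement at a \emph{random} time, which the fixed-time hypothesis $\p(Y_s\neq X_s\mid s<\tau)=0$ does not control; worse, since $X$ has no path regularity, $X_{\tau_n}$ need not even be measurable. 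Second, your proposed tool for $(\tau_n)$-negligibility --- the identity $A\cap\{s<\tau_n\}\in\F_{\tau_n}$ for $A\in\F_s$ --- does not apply to the sets $\{Y_s\neq X_s\}\cap\{s<\tau_n\}$, because $Y$ is \emph{not} assumed adapted (producing an adapted version is the whole point of the statement), so $\{Y_s\neq X_s\}\notin\F_s$. The ``main obstacle'' you acknowledge in your last paragraph is therefore not bookkeeping; it is unresolved, and it is the entire content of the theorem.

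The paper's proof avoids both problems by never evaluating $X$ or $Y$ at a random time and by defining the exceptional sets in terms of $X$ alone. It introduces dyadic stopping times $\tau_n^m$ decreasing to $\tau_n$ and taking countably many values, lets $D$ be the dyadic rationals, and defines $N_{n,m}$ as the set of $\omega$ for which the restriction of $t\mapsto X_t(\omega)$ to $D\cap[0,\tau_n^m(\omega)]$ fails to admit a unique c\`adl\`ag extension to $[0,\tau_n^m(\omega)]$; this set is $\F_{\tau_n^m}$-measurable by Lemma 3.2 of \cite{newkind}, whence $N_n=\bigcap_{m\in\N}N_{n,m}\in\F_{\tau_n}$ by right-continuity. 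The version $Y$ enters only to bound probabilities: $N_n\subset\{\exists\,t\in D\cap[0,\tau):\ X_t\neq Y_t\}$, a countable union of fixed-time null sets, so $\p(N_n)=0$ and $N=\bigcup_{n\in\N}N_n$ is $(\tau_n)$-negligible, hence in $\F_0$. Off $N$ the desired process is built pathwise as the unique c\`adl\`ag extension of $X$ restricted to dyadic times; adaptedness to $(\F_{t\wedge\tau-})_{t\geq0}$ follows from right-continuity of the filtration precisely because only deterministic-time values of $X$ are used, and indistinguishability from $Y$ follows since both processes are c\`adl\`ag and agree a.s.\ at every dyadic time before $\tau$. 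If you wish to keep your per-level structure, you must still carry out this dyadic construction at each level $n$; the usual-conditions shortcut cannot replace it.
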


\begin{proof}
We define the stopping times $(\tau_n^m)_{n,m\in\N}$ by
	\[\tau_n^m:=\sum_{k=1}^\infty\frac{k}{2^m}\indic_{\left\{\frac{k-1}{2^m}\leq \tau_n<\frac{k}{2^m}\right\}}.
\]
Then each $\tau_n^m$ takes only countably many values, $\tau_n^{m}\geq\tau_n^{m+1}\geq\tau_n$ and $\tau_n^m\ra\tau_n$ as $m\ra\infty$. Set
	\[D=\left\{\frac{k}{2^m}:\ k,m\in\N\right\},
\]
which is a countable dense subset of $\R_+$. Furthermore, define the function $f_\omega:D\ra\R$ via $f_\omega(t)=X_t(\omega)$. Then for all $n,m\in\N$ the set
	\[N_{n,m}=\left\{\omega\in\Omega:\ f_\omega|_{[0,\tau_n^m(\omega)]\cap D}\text{ does not admit a unique c\`adl\`ag extension to } [0,\tau_n^m(\omega)]\right\}
\]
is $\F_{\tau_n^m}$-measurable by Lemma 3.2 in \cite{newkind}, since $(X_{t\wedge\tau_n^m})_{t\geq0}$ is adapted to $(\F_t\cap\F_{\tau_n^m})_{t\geq0}$.
Furthermore, $N_{n,m}\supset N_{n,m+1}$ for all $n,m\in\N$ and
	\[N_n:=\bigcap_{m\in\N}N_{n,m}\in\F_{\tau_n},
\]
since the filtration is right-continuous.  Because $(Y_t)_{0\leq t<\tau}$ is a c\`adl\`ag version of $(X_t)_{0\leq t<\tau}$, we must have
	\[N_n\subset\{\omega\in\Omega\ |\ \exists\ t \in D\cap[0,\tau):\ X_t(\omega)\neq Y_t(\omega)\}=:C
\]
for all $n\in\N$. Since $(Y_t)_{0\leq t<\tau}$ is a version of $(X_t)_{0\leq t<\tau}$ on $(\Omega,\F_{\tau-},\p)$, $\p(C)=0$ and therefore also $\p(N_n)=0$ for all $n\in\N$, which implies that $N:=\bigcup_{n\in\N}N_{n}$ is $(\tau_n)$-negligible, i.e.~$\p(N)=0$ and $N\in\F_0$. Now, for $\omega\not\in N$ let $g_{\omega,n}$ be the unique c\`adl\`ag extension of the function $f_{\omega,n}:=f_{\omega}|_{[0,\tau_n)}$ from $D\cap[0,\tau_n)$ to $[0,\tau_n)$. By uniqueness the functions $(g_{\omega,n})_{n\in\N}$ are consistent, implying the existence of a c\`adl\`ag function $g_\omega:\ [0,\tau)\ra\R$ such that $g_\omega(t)=X_t(\omega)$ for all $t\in D\cap[0,\tau)$. Next, we define the c\`adl\`ag process $(\ol{X}_t)_{0\leq t<\tau}$ by $\ol{X}_t(\omega)=g_\omega(t)\indic_{\{\omega\not\in N\}}$. Indeed, for all $t^\omega<\tau(\omega)$ and for every sequence $(t^\omega_n)_{n\in\N}\subset D\cap [0,\tau(\omega))$ tending to $t^\omega$ from above, we have
	\[\ol{X}_{t^\omega}(\omega)=\indic_{\{\omega\not\in N\}}\lim_{n\ra\infty}g_\omega(t^\omega_n)=\indic_{\{\omega\not\in N\}}\lim_{n\ra\infty}X_{t^\omega_n}(\omega).
\]
Because the filtration is right-continuous and $N\in\F_0$, the adaptedness of $(X_t)_{t\geq0}$ implies that the process $(\ol{X}_t)_{0\leq t<\tau}$ is adapted to $(\F_{t\wedge\tau-})_{t\geq0}$. Since $Y$ is a version of $X$, $\ol{X}_t(\omega)$ is the right limit of $Y$ at $t$ restricted to $D\cap[0,\tau)$ for all $t\in[0,\tau)$ for almost all $\omega\in\Omega$. But since both, $Y$ and $\ol{X}$, are c\`adl\`ag on $(\Omega,\F_{\tau-},\p)$, $\ol{X}_t=Y_t$ for $t\in[0,\tau)$ $\p$-almost surely. Since $\p(Y_t\neq X_t\ |\ t<\tau)=0$ for all $t\geq0$ with $\p(\tau>t)>0$ by assumption, $\ol{X}$ is a c\`adl\`ag and adapted version of $X$ on $(\Omega,\F_{\tau-},(\F_{t\wedge\tau-})_{t\geq0},\p)$. Moreover, since both, $\ol{X}$ and $Y$, are c\`adl\`ag versions of $X$ on $(\Omega,\F_{\tau-},(\F_{t\wedge\tau-})_{t\geq0},\p)$, they must be indistinguishable. 
\end{proof}

It should be obvious that in a similar way other classical results of stochastic analysis like the Doob-Meyer decomposition or the existence of stochastic integrals can be proven up to time $\tau$. We will not go in any more details here, but instead concentrate on the application of the $(\tau_n)$-natural augmentation in the context of the extension of probability measures associated to strict local martingales in the next section. 

\s{Change of measure by a (strict) local martingale}\label{newmeas}

We briefly review the construction of a probability measure associated to a positive (strict) local martingale. For more details the reader may consult \cite{split1}.\\

In the following let $(\Omega,\F,(\F_t)_{t\geq0},\p)$ be a filtered probability space. Furthermore, we denote by $({\F}^+_t)_{t\geq0}$ the right-continuous augmentation of $(\F_t)_{t\geq0}$, i.e.~${\F}^+_t:=\F_{t+}=\bigcap_{s>t}\F_s$ for all $t\geq0$. Note that for now the filtration is \textit{not} completed with any negligible set of $\F$. In order to be able to construct the measure $\q$ associated with a (strict) local martingale $X$ mentioned in the introduction, the underlying probability space has to fulfill certain topological requirements.

\begin{df}
{\bf (cf.~\cite{exit})} Let $(\F_t)_{t\in T}$ be a filtration on $\Omega$, where $T$ is a partially ordered non-void index set. We call $(\F_t)_{t\in T}$ a standard system if
\bi
\item each measurable space $(\Omega,\F_t)$ is a standard Borel space, i.e.~$\F_t$ is $\sigma$-isomorphic to the $\sigma$-field of Borel sets on some complete separable metric space.
\item for any increasing sequence $(t_i)_{i\in\N}\subset T$ and for any $A_1\supset A_2\supset\dots\supset A_i\supset\dots$, where $A_i$ is an atom of ${\F}_{t_i}$, we have $\bigcap_i A_i\neq\emptyset$.
\ei
\end{df}

The most important examples of standard systems are the filtrations generated by the coordinate process on the spaces $C'(\R_+,\ol{\R}_+)$ or $D'(\R_+,\ol{\R}_+)$ of all non-negative continuous resp.~c\`adl\`ag functions $(\omega(t))_{t\geq0}$ that have left limits on $(0,\alpha(\omega))$ for some $\alpha(\omega)\in[0,\infty]$ and remain constant on $[\alpha(\omega),\infty)$ at the value $\lim_{t\uparrow\alpha(\omega)}\omega(t)$ if this limit exists and at $\infty$ otherwise. Note that the spaces $C(\R_+,\R)$ or $C([0,1],\R)$, endowed with   the filtrations generated by the coordinate process, are not standard systems. Adding the point $\{\infty\}$ is crucial.\\

\textit{Notation:}
When working on the subspace $(\Omega,\F_{\tau-})$ of $(\Omega,\F)$, where $\tau$ is some $(\F_t)$-stopping time, we must restrict the filtration to $(\F_{t\wedge\tau-})_{t\geq0}$, where with a slight abuse of notation we set $\F_{t\wedge\tau-}:=\F_t\cap\F_{\tau-}$.
In the following we may also write $(\F_t)_{0\leq t<\tau}$ for the filtration on $(\Omega,\F_{\tau-},\p)$.\\

The following theorem is a generalization of Theorem 4 in \cite{DSbessel} and Proposition 1 in \cite{PalP} that deal with continuous local martingales on path spaces. Its proof relies on the construction of the F\"ollmer measure, cf.~\cite{exit} and can be found in \cite{split1}. In Theorem \ref{kompaug} below we will state a further extension of this result involving the new kind of augmentation of filtrations introduced in section \ref{augment}.

\begin{thm}\label{komplett}
Let $\left(\Omega,\F,(\F_t)_{t\geq 0},\p\right)$ be a filtered probability space and assume that $(\F_t)_{t\geq0}$ is a standard system. Let $X$ be a c\`adl\`ag local martingale on the space $(\Omega,\F,({\F}^+_{t})_{t\geq 0},\p)$ with values in $(0,\infty)$ and $X_0=1$ $\p$-almost surely. We define $\tau^X_n:=\inf\{t\geq 0:\ X_t>n\}\wedge n$ and $\tau^X=\lim_{n\ra\infty}\tau^X_n$. Then there exists a unique probability measure ${\q}$ on 
$\left(\Omega,{\F}^+_{\tau^X-},({\F}^+_{t\wedge\tau^X-})_{t\geq0}\right)$, such that $\frac{1}{X}$ is a ${\q}$-martingale up to time $\tau^X$. Furthermore, ${\q}|_{{\F}^+_{t}\cap\F^+_{\tau^X-}}\gg{\p}|_{{\F}^+_t\cap\F^+_{\tau^X-}}$ for all $t\geq0$ with Radon-Nikodym derivative given by $\left.\frac{d{\p}}{d{\q}}\right|_{{\F}^+_t\cap{\F}^+_{\tau^X-}}=\frac{1}{X_t}\indic_{\{t<\tau^X\}}=\frac{1}{X_t}$.\\
Moreover, $X$ is a strict local $\p$-martingale if and only if $\q(\tau^X<\infty)>0$.
\end{thm}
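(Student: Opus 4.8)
The plan is to reduce the claim to the single identity
\[
  \E^\p[X_t]=\q(\tau^X>t),\qquad t\geq0,
\]
and then to invoke the elementary fact that a non-negative supermartingale is a true martingale precisely when its expectation is constant. The only inputs are the existence of $\q$ and the explicit Radon--Nikodym derivative furnished by the first part of the theorem.

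First I would pin down the behaviour of $\tau^X$ under $\p$. Being a non-negative local martingale, $X$ is a $\p$-supermartingale, so Doob's maximal inequality yields $\p(\sup_{s\leq t}X_s\geq\lambda)\leq\E^\p[X_0]/\lambda=1/\lambda$ and hence $\sup_{s\leq t}X_s<\infty$ $\p$-a.s.\ for every $t$. Consequently $\tau^X_n\uparrow\infty$ and $\tau^X=\infty$ $\p$-almost surely, so that $\indic_{\{t<\tau^X\}}=1$ $\p$-a.s.; moreover $t\mapsto\E^\p[X_t]$ is non-increasing with $\E^\p[X_0]=1$.

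Next I would derive the identity from the density. On $\F^+_t\cap\F^+_{\tau^X-}$ we have $\p\ll\q$ with $h:=\frac{d\p}{d\q}=\frac{1}{X_t}\indic_{\{t<\tau^X\}}$, and since $X_t>0$ $\q$-a.s.\ on $\{t<\tau^X\}$ (see the footnote) we get $\{h>0\}=\{t<\tau^X\}$ $\q$-a.s. Applying the general relation $\E^\q[k\,\indic_{\{h>0\}}]=\E^\p[k/h]$ for $k\geq0$, which rests only on $\p\ll\q$ with density $h$, to $k\equiv1$ gives
\[
  \q(\tau^X>t)=\E^\q[\indic_{\{h>0\}}]=\E^\p\!\left[\tfrac{1}{h}\right]=\E^\p\!\left[X_t\indic_{\{t<\tau^X\}}\right]=\E^\p[X_t],
\]
the last equality because $\{t<\tau^X\}$ carries full $\p$-measure; here $\{t<\tau^X\}\in\F^+_t\cap\F^+_{\tau^X-}$, so that all quantities are well defined.

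Finally I would conclude. As $X$ is a non-negative $\p$-supermartingale with $X_0=1$, it is a true $\p$-martingale if and only if $\E^\p[X_t]=1$ for all $t\geq0$; by the identity this is equivalent to $\q(\tau^X>t)=1$ for every $t$, i.e.\ to $\q(\tau^X\leq t)=0$ for every $t$, i.e.\ (since $\{\tau^X<\infty\}=\bigcup_{n\in\N}\{\tau^X\leq n\}$) to $\q(\tau^X<\infty)=0$. Passing to contrapositives then shows that $X$ is a strict local $\p$-martingale exactly when $\q(\tau^X<\infty)>0$. I expect the only delicate point to be the inversion of the Radon--Nikodym derivative in the display: one must pass correctly from $d\p/d\q$ to a $\q$-probability written as a $\p$-expectation while accounting for the vanishing of $h$ on $\{t\geq\tau^X\}$, where under $\q$ the process $X$ has already exploded. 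The supermartingale and constant-expectation steps are routine.
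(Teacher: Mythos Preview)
The paper does not actually prove this theorem: it states that the proof ``relies on the construction of the F\"ollmer measure, cf.~\cite{exit} and can be found in \cite{split1}.'' So there is no in-paper argument against which to compare the construction of $\q$ and the Radon--Nikodym formula.

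Your proposal deliberately treats that first part as given and addresses only the final ``Moreover'' clause. For that clause your argument is correct and coincides with what the paper itself does in the paragraph immediately following the theorem, where it writes
\[
\q(\tau^X\leq t)=1-\q(\tau^X>t)=1-\E^\p(X_t).
\]
You have merely spelled out the two surrounding observations that the paper leaves implicit: that $\tau^X=\infty$ $\p$-a.s.\ (so $\E^\p[X_t\indic_{\{t<\tau^X\}}]=\E^\p[X_t]$), and that a non-negative supermartingale with $X_0=1$ is a true martingale iff $\E^\p[X_t]\equiv 1$. Your inversion of the density via $\E^\q[k\,\indic_{\{h>0\}}]=\E^\p[k/h]$ is the standard way to pass from $d\p/d\q$ to a $\q$-probability and is fine here because $\{h>0\}=\{t<\tau^X\}$ $\q$-a.s.

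If, however, the task was to prove the full theorem, then the substantive content---verifying that $(\F^+_{\tau^X_n})_n$ forms a standard system in the sense of Parthasarathy/F\"ollmer, extending the consistent family $\q_n=X_{\tau^X_n}\cdot\p$ to $\F^+_{\tau^X-}$, and showing that $1/X$ is a $\q$-martingale---is entirely absent from your proposal, just as it is absent from the paper. What you have written is a correct proof of a corollary of the main construction, not of the construction itself.
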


From here it is easy to see why we cannot work with the natural augmentation of $(\F_t)_{t\geq0}$, but will have to use the $(\tau_n^X)$-natural augmentation introduced in section \ref{augment}. Indeed, we have $A_t:=\{t\geq\tau^X\}\in\F^+_t\cap\F_{\tau^X-}$ and ${\p}(A_t)=0$ for all $t\geq0$, while 
	\[\q(A_t)=1-\q(\tau^X>t)=	1-\E^\p(X_t)>0
\]
for some $t>0$, if $X$ is a strict local martingale. Now, if $(\Omega,\F,(\F_t)_{t\geq0},\p)$ satisfied the natural conditions, then $A_t\in\F_0$ for all $t\geq0$ and since $\p|_{\F_0}=\q|_{\F_0}$ this would imply that $\q(A_t)=\p(A_t)=0$ for all $t\geq0$, an obvious contradiction.
\\

With the help of section \ref{augment} we can nevertheless state the following extension of Theorem \ref{komplett}:

\begin{thm}\label{kompaug}
Let $\left(\Omega,\F,(\F_t)_{t\geq 0},\p\right)$ be a filtered probability space and assume that $(\F_t)$ is a standard system. Let $X$ be a c\`adl\`ag local martingale on the space $(\Omega,\F,({\F}^+_t)_{t\geq 0},\p)$ with values in $(0,\infty)$ and $X_0=1$ $\p$-almost surely. We define $\tau^X_n:=\inf\{t\geq 0:\ X_t>n\}\wedge n$, $\tau^X:=\lim_{n\ra\infty}\tau^X_n$ and denote by $(\Omega,\tilde{\F},(\tilde{\F}_t)_{t\geq 0},\tilde{\p})$ the $(\tau^X_n)$-augmentation of $(\Omega,\F,({\F}_{t})_{t\geq 0},\p)$. Then there exists a unique probability measure $\tilde{\q}$ on $(\Omega,\tilde{\F}_{\tau^X-},(\tilde{\F}_{t\wedge\tau^X-})_{t\geq0})$, such that $\frac{1}{X}$ is a $\tilde{\q}$-martingale. Furthermore, $\tilde{\q}|_{\tilde{\F}_{t}}\gg\tilde{\p}|_{\tilde{\F}_t}$ for all $t\in[0,\tau^X)$ with Radon-Nikodym derivative  $\left.\frac{d\tilde{\p}}{d\tilde{\q}}\right|_{\tilde{\F}_{t}\cap\tilde{\F}_{\tau^X-}}=\frac{1}{X_t}\indic_{\{t<\tau^X\}}$.
\end{thm}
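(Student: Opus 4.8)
The plan is to obtain $\tilde{\q}$ as the extension to the augmented $\sigma$-algebra $\tilde{\F}_{\tau^X-}$ of the measure $\q$ furnished by Theorem~\ref{komplett}, in exact parallel with step~4 of the proof of the augmentation theorem in Section~\ref{augment}. Theorem~\ref{komplett} already gives a unique $\q$ on $(\Omega,\F^+_{\tau^X-},(\F^+_{t\wedge\tau^X-})_{t\geq0})$ making $\frac1X$ a martingale up to $\tau^X$, with $\left.\frac{d\p}{d\q}\right|_{\F^+_t\cap\F^+_{\tau^X-}}=\frac1{X_t}\indic_{\{t<\tau^X\}}$. Writing $\mathcal N$ for the family of $(\tau^X_n)$-negligible sets, the characterization $\tilde{\F}_{\tau^X-}=\{A\mid\exists A'\in\F^+_{\tau^X-}:\ A\Delta A'\in\mathcal N\}$ established in Section~\ref{augment} suggests setting $\tilde{\q}(A):=\q(A')$, and the whole argument then hinges on showing this is legitimate.

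The hard part, and the step genuinely new compared with Section~\ref{augment}, is to verify that $\tilde{\q}$ is well defined, i.e.~that $\q(N)=0$ for every $N\in\mathcal N\cap\F^+_{\tau^X-}$. Here one must exploit the specific geometry of the $(\tau^X_n)$-augmentation: $\q$ is \emph{not} absolutely continuous with respect to $\p$ on $\F$ — indeed $\{t\geq\tau^X\}$ is $\p$-null but carries positive $\q$-mass — yet it \emph{is} absolutely continuous on each $\F_{\tau^X_n}$, where it restricts to the consistent family member $\q_n=X_{\tau^X_n}.\p$. I would cover $N$ by sets $B_n\in\F_{\tau^X_n}$ with $\p(B_n)=0$; then $X_{\tau^X_n}\indic_{B_n}=0$ $\p$-a.s.~gives $\q(B_n)=\E^\p[X_{\tau^X_n}\indic_{B_n}]=0$, so that $\q(N)\leq\sum_n\q(B_n)=0$ by countable subadditivity. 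The point is that $(\tau^X_n)$-negligibility forces the covering sets into the $\sigma$-algebras $\F_{\tau^X_n}\subset\F^+_{\tau^X-}$, which is precisely what keeps the troublesome sets such as $\{t\geq\tau^X\}$ out of $\mathcal N$ and thereby renders the extension consistent with $\q$.

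Granting well-definedness, I would next check that $\tilde{\q}$ is a probability measure on $\tilde{\F}_{\tau^X-}$ extending $\q$, its $\sigma$-additivity and its uniqueness among such extensions being copied verbatim from step~4 of the augmentation theorem; uniqueness holds because any admissible extension is forced to assign mass zero to the sets of $\mathcal N$. In particular these sets are $\tilde{\q}$-null, which is the only property of the augmentation that the proofs of Lemma~\ref{exp} and Corollary~\ref{local} actually use.

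Finally I would transfer the analytic conclusions. Since $\mathcal N$ is $\tilde{\q}$-null and, by the characterization of Section~\ref{augment}, every set of $\tilde{\F}_{t\wedge\tau^X-}=\tilde{\F}_t\cap\tilde{\F}_{\tau^X-}$ agrees up to $\mathcal N$ with one of $\F^+_{t\wedge\tau^X-}$, the argument of Lemma~\ref{exp} applies word for word to the pair $(\q,\tilde{\q})$, giving $\E^{\tilde{\q}}(\,\cdot\mid\tilde{\F}_{t\wedge\tau^X-})=\E^{\q}(\,\cdot\mid\F^+_{t\wedge\tau^X-})$ $\tilde{\q}$-a.s.; together with Corollary~\ref{local} this promotes the $\q$-martingale $\frac1X$ to a $\tilde{\q}$-martingale. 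For the Radon--Nikodym identity I would represent a set $A\in\tilde{\F}_t\cap\tilde{\F}_{\tau^X-}$ by $A'\in\F^+_t\cap\F^+_{\tau^X-}$ with $A\Delta A'\in\mathcal N$; since $\tilde{\p},\tilde{\q}$ restrict to $\p,\q$ and $\mathcal N$ is null for both, testing $\frac1{X_t}\indic_{\{t<\tau^X\}}$ against $\tilde{\F}_t\cap\tilde{\F}_{\tau^X-}$ reduces to the corresponding identity in Theorem~\ref{komplett}, and the absolute continuity $\tilde{\q}|_{\tilde{\F}_t}\gg\tilde{\p}|_{\tilde{\F}_t}$ for $t<\tau^X$ then follows from $\frac1{X_t}>0$ on $\{t<\tau^X\}$.
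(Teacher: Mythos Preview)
Your proposal is correct and takes essentially the same approach as the paper. Both hinge on the observation that $(\tau^X_n)$-negligible sets for $\p$ are also $\q$-null (since $\q\sim\p$ on each $\F^+_{\tau^X_n}$), then extend $\q$ to the augmented $\sigma$-algebra and reduce the Radon--Nikodym identity to Theorem~\ref{komplett}; the only cosmetic difference is that the paper packages your direct construction $\tilde{\q}(A):=\q(A')$ as ``take the $(\tau^X_n)$-augmentation of the $\q$-space and identify the resulting filtration with $(\tilde{\F}_{t\wedge\tau^X-})_{t\geq0}$''.
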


\begin{proof}
Let $\left(\Omega,\ol{\F}_{\tau^X-},(\ol{\F}_{t\wedge\tau^X-})_{t\geq0},\ol{\q}\right)$ be the $(\tau^X_n)$-augmentation of $\left(\Omega,\F^+_{\tau^X-},(\F^+_{t\wedge\tau^X-})_{t\geq0},\q\right)$ as constructed in Theorem \ref{komplett}. Then $\ol{\F}_{t\wedge\tau^X-}=\tilde{\F}_{t\wedge\tau^X-}$ for $t\geq0$ and $\tilde{\F}_{\tau^X-}=\ol{\F}_{\tau^X-}$: $A$ is $(\tau^X_n)$-negligible with respect to $\left(\Omega,\F,(\F^+_t)_{t\geq0},\p\right)$ iff there exist $(B_n)_{n\in\N}$ such that $A\subset\bigcup_{n\in\N}B_n$ and $B_n\in\F^+_{\tau^X_n},\ \p(B_n)=0$ for all $n\in\N$. Since $\q|_{\F^+_{\tau^X_n}}\sim\p|_{\F^+_{\tau^X_n}}$, $\q(B_n)=\p(B_n)=0$. Thus, $A$ is $(\tau^X_n)$-negligible with respect to $(\Omega,\F,(\F^+_t)_{t\geq0},\p)$ iff $A$ is $(\tau^X_n)$-negligible with respect to $\left(\Omega,\F^+_{\tau^X-},(\F^+_{t\wedge\tau^X-})_{t\geq0},\q\right)$, i.e.~$A\in\ol{\F}_{t\wedge\tau^X-}$ for $t\geq0$.\\ Now let $A\in\tilde{\F}_t$ for some $t\geq0$, i.e.~there exists $A'\in\F_t^+$ such that $A\Delta A'$ is $(\tau^X_n)$-negligible with respect to $\q$ and $\p$. Then:
	\[\tilde{\p}(A)=\tilde{\p}(A')=\p(A')=\E^\q\left(\indic_{\{A',\tau^X>t\}}\frac{1}{X_t}\right)=\E^{\ol{\q}}\left(\indic_{\{A',\tau^X>t\}}\frac{1}{X_t}\right)
	=\E^{\ol{\q}}\left(\indic_{\{A,\tau^X>t\}}\frac{1}{X_t}\right),
\]
i.e.~$\left.\frac{d\tilde{\p}}{d\ol{\q}}\right|_{\tilde{\F_t}\cap\tilde{\F}_{\tau^X-}}=\frac{1}{X_t}\indic_{\{\tau^X>t\}}$. Identifying $\tilde{\q}$ with $\ol{\q}$ yields the result.
\end{proof}

Let us briefly explain why the $(\tau_n^X)$-natural augmentation is "good enough" for the setup considered here. First note that the measure $\q$ is unevitably connected with the local martingale $X$. Therefore, it is not surprising that also the augmentation depends on the process $X$ itself. On the other hand every process $Y$ defined on $(\Omega,\F,(\F_t)_{t\geq0},\p)$ is only defined up to time $\tau^X$ under $\q$. Since one is normally interested in the $\p$-probability of events and uses the measure $\q$ just as a helpful device to infer something about the $\p$-probabilites, it is therefore almost always sufficient in applications to have results from stochastic analysis holding only until time $\tau^X$, because everything that happens with positive probability under $\p$ takes place before time $\tau^X$ $\q$-almost surely. \\


Last but not least let us point out two situations in which it seems important to have nice versions of trajectories, i.e.~processes which are regular everywhere and not only up to a nullset. Clearly, this is necessary if one considers an infinite number of stochastic processes. As already mentioned in the introduction this happens regularly in optimization problems as for example in portfolio optimization. Indeed, even if the number of stocks is finite, the set of admissible trading strategies is in general so rich that the set of possible portfolio value processes is uncountable.\\

As a second example consider the occupation times formula, which requires in its proof a jointly continuous version of the field of local times at all levels and points in time. However, it was shown in \cite{newkind} that without augmentation there does generally not exist a c\`adl\`ag and adapted version of the local time process at level $a\in\R$, i.e.~local times can explode in finite time on some set. Hence, if one wants to apply powerful results from stochastic analysis like the occupation times formula, one should work on augmented probability spaces.



\end{document}